\numberwithin{equation}{section}
  \newtheorem{theorem}{Theorem}[section]
  \newtheorem{proposition}[theorem]{Proposition}
  \newtheorem{lemma}[theorem]{Lemma}
  \newtheorem{corollary}[theorem]{Corollary}
\title[On doubly twisted product immersions]{On doubly twisted product immersions}
\author[Abdoul Salam Diallo, Fortun\'{e} Massamba ]{Abdoul Salam Diallo*, Fortun\'{e} Massamba**}
\newcommand{\acr}{\newline\indent}
\address{\llap{*\,} School of Mathematics, Statistics and Computer Science\acr
 University of KwaZulu-Natal\acr
 Private Bag X01, Scottsville 3209\acr
South Africa  \acr
and \acr
Universit\'e Alioune Diop de Bambey\acr
UFR SATIC, D\'epartement de Math\'ematiques\acr
B. P. 30, Bambey, S\'en\'egal}
\email{Diallo@ukzn.ac.za, abdoulsalam.diallo@uadb.edu.sn}
\address{\llap{**\,} School of Mathematics, Statistics and Computer Science\acr
 University of KwaZulu-Natal\acr
 Private Bag X01, Scottsville 3209\acr
South Africa}
\email{massfort@yahoo.fr, Massamba@ukzn.ac.za}
\thanks{} 
\subjclass[2010]{Primary 53B05; Secondary 53B20}
\keywords{Doubly twisted product, Immersion.}
\begin{document}
 
\begin{abstract}  
Some basic geometric properties of doubly twisted product immersions are established.
\end{abstract}

\maketitle

\section{Introduction}

One is often interested in the decomposition of mathematical objects as a first step for a classification. A well-known theorem of de Rham gives sufficient conditions for a Riemannian manifold to be a Riemannian product. Also well-known is Moore's theorem which gives a sufficient condition for an
isometric immersion into a Euclidean space to split into a product immersion. An analogue for de Rham's theorem for warped product metrics was
given by Hiepko \cite{hi}. Also, an analogue of Moore's theorem for warped product immersions was given by Nolker \cite{nolker}.

Intrinsic and extrinsic invariants are very powerful to study submanifolds of Riemannian manifolds, and to establish relationship between them is one of
the must fundamental problems in submanifolds theory \cite{chenbook}. In this context, Chen \cite{chen2,chen5} proved some basic inequalities
for warped products isometrically immersed in arbitrary Riemannian manifolds. Corresponding inequalities have been obtained for doubly warped
product submanifolds into arbitrary Riemannian manifolds in \cite{faghfouri,olteanu}.

In this paper, we consider doubly twisted products which are a generalization of singly warped products and doubly warped products. After a
preliminary section containing basic notions of Riemannian submanifold theory, warped product and doubly twisted products, we extend in Section \ref{main} an inequality of Chen involving the squared mean curvature vector obtained in \cite{chen5,faghfouri} for doubly twisted product submanifolds. We finally end the paper by a question: May Theorems by Moore and Nolker (see \cite{moore} and \cite{nolker} for more details) be extended to doubly twisted product immersions?

\section{Preliminaries} \label{Prel}

\subsection{Background on submanifolds}

Let $N$ and $M$ be two differentiable manifolds of dimensions $n$ and $m$, respectively. We say that a differentiable map $\phi :N\rightarrow M$ is an
\textit{immersion} if the differential $\phi _{\ast }(p):T_{p}N\rightarrow T_{\phi (p)}M$ is injective for all $p\in N$. An immersion $\phi
:N\rightarrow M$ between two Riemannian manifolds with metrics $g_{N}$ and $g_{M}$, respectively, is called an \textit{isometric immersion} if
$$
g_{M}(\phi _{\ast }X,\phi _{\ast }Y)=g_{N}(X,Y),
$$ 
for every $p\in N$ and for all vector fields $X,Y$ tangent to $N$. One of the most fundamental problems in submanifold theory is the immersibility of
a Riemannian manifold in a Euclidean space (or, more generally, in a space form). According to the well-known theorem of Nash, every Riemannian $n$-manifold admits an isometric immersion into the Euclidean space $\mathbb{E}^{n(n+1)(3n+11)/2}$. In general, there exist enormously many isometric
immersions from a Riemannian manifold into Euclidean spaces if no restriction on the codimension is made \cite{chenbook}. For a submanifold of a
Riemannian manifold, there are associated several extrinsic invariants beside its intrinsic invariants. Among the extrinsic invariants, the mean curvature
vector and shape operator are the most fundamental ones \cite{chenbook}.

Let $\phi :N\rightarrow M$ be an isometric immersion of a Riemannian manifold $(N,g_{N})$ into a Riemannian manifold $(M,g_{M})$. The formulas of
Gauss and Weingarten are given respectively by
\begin{align}
\overline{\nabla }_{X}Y  =\nabla _{X}Y + h(X,Y),\;\;
\overline{\nabla }_{X}\eta =-A_{\eta }X+D_{X}\eta,
\end{align} 
for all vectors fields $X,Y$ tangent to $N$ and $\eta $ normal to $N$, where $\nabla $ and $\overline{\nabla }$ denote the Levi-Civita connections on $N$
and $M$, respectively, $h$ is the second fundamental form, $D$ the normal connection and $A$ the shape operator of $\phi $. The shape operator and the
second fundamental form are related by
\begin{equation*}
g_{M}(A_{\eta }X,Y)=g_{M}(h(X,Y),\eta ).
\end{equation*} 
Moreover, the mean curvature vector $H$ of the submanifold $N$ is defined by
\begin{equation*}
H=\frac{1}{n}\mathrm{trace}\,h=\frac{1}{n}\sum_{i=1}^{n}h(e_{i},e_{i}),
\end{equation*} 
where $(e_{1},\cdots ,e_{n})$ is a local orthonormal frame of the tangent bundle $TN$ of $N$. The squared mean curvature is given by
$
\left\Vert H\right\Vert ^{2}=\left\langle H,H\right\rangle.
$

A submanifold $N$ is said to be \textit{minimal} in $M$ if the mean curvature vector of $N$ in $M$ vanishes identically. A submanifold $N$ in a Riemannian manifold $M$ is called \textit{totally geodesic} if its second fundamental form $h$ vanishes identically. It is said to be \textit{totally umbilical} if its second fundamental form $h$ satisfies 
$
h  = g_{M}\otimes H.
$

\subsection{Warped product manifolds}

Let $(N_{1},g_{N_{1}})$ and $(N_{2},g_{N_{2}})$ be two Riemannian manifolds of dimensions $n_{1}$ and $n_{2}$, respectively, and let $\sigma $ be a
positive differentiable function on $N_{1}$. The \textit{warped product} $N_{1}\times _{\sigma }N_{2}$ is defined to be the product manifold $%
N_{1}\times N_{2}$ equipped with the Riemannian metric given by
\begin{equation*}
g_{N_{1}}+\sigma ^{2}g_{N_{2}}.
\end{equation*} 
This notion was introduced by Bishop and O'Neill for constructing negatively curvature manifolds \cite{bishopetoneil}. B. O'Neill in \cite{oneil}
discussed warped products and explored curvature formulas of warped products in terms of curvatures of components of warped product. The warped products
play some important role in differential geometry as well as in physics. Later this notion has been extended to a doubly warped product, a twisted
product, a doubly twisted product and a multiply warped product manifolds.

Let $M_{1}\times _{\rho }M_{2}$ be a Riemannian warped product manifold and $\phi _{i}:N_{i}\rightarrow M_{i}$, $i=1,2$ be isometric immersions between Riemannian manifolds. Define a positive function $\sigma =\rho \circ \phi_{1}$. Then the map
$ 
\phi :N_{1}\times _{\sigma }N_{2}\rightarrow M_{1}\times _{\rho }M_{2},
$ 
given by
$ 
\phi (x_{1},x_{2})=(\phi _{1}(x_{1}),\phi _{2}(x_{2})),
$ 
is an isometric immersion, which is called a \textit{warped product immersion} \cite{chen5}. This notion appeared in several recent studies related to
different geometric aspects. For examples, it appeared in the study of multi-rotation surfaces in \cite{DillenNolker}, a decomposition problem in
\cite{nolker}, a geometric inequality and minimal immersion problem in \cite{chen2}, and also in the study done by M. Dajczer \textit{et al.} \cite{dajczer}. Chen also studied in \cite{chen2,chen5} the fundamental geometry properties of warped
product immersions.

\subsection{Twisted product manifolds}

Let $(N_{1},g_{N_{1}})$ and $(N_{2},g_{N_{2}})$ be two Riemannian manifolds of dimensions $n_{1}$ and $n_{2}$, respectively, and let $\pi_{1}:N_{1}\times N_{2}\rightarrow N_{1}$ and $\pi _{2}:N_{1}\times N_{2}\rightarrow N_{2}$ be the canonical projections. Also, let $\sigma
_{1}:N_{1}\times N_{2}\rightarrow \left( 0,\infty \right) $ and $\sigma_{2}:N_{1}\times N_{2}\rightarrow \left( 0,\infty \right) $ be positive
differentiable functions. The \textit{doubly twisted product} of Riemannian manifolds $(N_{1},g_{N_{1}})$ and $(N_{2},g_{N_{2}})$ with twisting
functions $\sigma _{1}$ and $\sigma _{2}$ is the product manifold $N=N_{1}\times N_{2}$ equipped with the metric tensor $g_{N}=\sigma
_{2}{}^{2}g_{N_{1}}\oplus \sigma _{1}{}^{2}g_{N_{2}}$ given by
\begin{equation*}
g_{N}=\sigma _{2}{}^{2}\pi _{1}^{\ast }g_{N_{1}}+\sigma _{1}{}^{2}\pi_{2}^{\ast }g_{N_{2}}.
\end{equation*}%
We denote the Riemannian manifold $(N,g_{N})$ by $N_{1}\times _{(\sigma_{1},\sigma _{2})}N_{2}$. In particular, if $\sigma _{2}=1$ is constant,
then $N_{1}\times _{(\sigma _{1},\sigma _{2})}N_{2}$ is called the \textit{twisted product} of $(N_{1},g_{N_{1}})$ and $(N_{2},g_{N_{2}})$ with
twisting function $\sigma _{1}$. Moreover, if $\sigma _{1}$ depends only on $N_{1}$, then $N_{1}\times _{\sigma _{1}}N_{2}$ is called \textit{warped
product} of $(N_{1},g_{N_{1}})$ and $(N,g_{N_{2}})$ with the warping function $\sigma _{1}$.

Ponge and Reckziegel in \cite{ponge} mentioned that the conformal change of a Riemannian metric can be interpreted as a twisted product, namely, one where the first factor $M_1$
consists of one point only. Therefore, formulas and assertions for doubly twisted products are applicable in many situations.

For a vector field $X$ on $N_{1}$, the lift of $X$ to $N_{1}\times _{(\sigma_{1},\sigma _{2})}N_{2}$ is the vector field $\tilde{X}$ whose value at each
$(p,q)$ is the lift of $X_{p}$ to $(p,q)$. Thus the lift of $X$ is the unique vector field on $N_{1}\times _{(\sigma _{1},\sigma _{2})}N_{2}$ that
is $\pi _{1}$-related to $X$ and $\pi _{2}$-related to the zero vector field on $N_{2}$. For a doubly twisted product $N_{1}\times _{(\sigma _{1},\sigma
_{2})}N_{2}$, let $\mathcal{D}_{i}$ denote the distribution obtained from the vectors tangent to the horizontal lifts of $N_{i}$. Denote by $\nabla $ and $\nabla ^{0}$ the Levi-Civita connections on $N_{1}\times N_{2}$ associated with the doubly twisted product metric $g_{N}=\sigma _{2}{}^{2}g_{N_{1}}\oplus \sigma _{1}{}^{2}g_{N_{2}}$ and with the direct product metric $g_{0}=g_{N_{1}}+g_{N_{2}}$, respectively. We have the following.
\begin{proposition}
\label{Propos1} The Levi-Civita connections $\nabla $ and $\nabla ^{0}$ are related by
\begin{align}\label{Conn1}
\nabla_X Y &= \nabla^{0}_{X} Y + X(\ln \sigma_2)Y + Y(\ln \sigma_2)X\nonumber\\
& - g_{N_1}(X,Y) \mathrm{grad} (\ln \sigma_2), \\\label{Conn2}
\nabla_V W &=  \nabla^{0}_{V} W + V(\ln \sigma_1)W + W(\ln \sigma_1)V \nonumber\\
&- g_{N_2}(V,W) \mathrm{grad}(\ln \sigma_1),\\\label{Conn3}
\nabla_X V &=  \nabla_V X= V(\ln \sigma_1)X +  X(\ln \sigma_2)V, 
 \end{align}
for any $X$, $Y \in \mathcal{D}_1$ and $V$, $W \in \mathcal{D}_2$.
\end{proposition}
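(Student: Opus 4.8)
The plan is to determine $\nabla$ directly from the Koszul formula for $g_N$ and to read off its components in the two distributions, rather than to guess a correction tensor. Since both $\nabla$ and $\nabla^{0}$ are Levi-Civita connections, their difference $\Theta(A,B):=\nabla_A B-\nabla^{0}_A B$ is a symmetric $(1,2)$-tensor; hence it suffices to evaluate $\Theta$ on pairs drawn from $\mathcal{D}_1$ and $\mathcal{D}_2$, and the three cases $(X,Y)$, $(V,W)$, $(X,V)$ yield \eqref{Conn1}, \eqref{Conn2} and \eqref{Conn3} respectively.

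First I would record the structural facts that drive every computation. The distributions $\mathcal{D}_1,\mathcal{D}_2$ are orthogonal for both $g_N$ and $g_0$; on $\mathcal{D}_1$ one has $g_N=\sigma_2^{2}g_{N_1}$ and on $\mathcal{D}_2$ one has $g_N=\sigma_1^{2}g_{N_2}$; for horizontal and vertical lifts the brackets satisfy $[X,V]=0$, $[X,Y]\in\mathcal{D}_1$ and $[V,W]\in\mathcal{D}_2$; the function $g_{N_1}(X,Y)$ is constant along $\mathcal{D}_2$ and $g_{N_2}(V,W)$ is constant along $\mathcal{D}_1$; and $\nabla^{0}$ restricts to the Levi-Civita connections of the factors with $\nabla^{0}_X V=\nabla^{0}_V X=0$. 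These are exactly the ingredients that make all but one term of Koszul's identity collapse in each case.

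Then for each pair I would apply
\[
2g_N(\nabla_A B,C)=A\,g_N(B,C)+B\,g_N(A,C)-C\,g_N(A,B)+g_N([A,B],C)-g_N([A,C],B)-g_N([B,C],A),
\]
once with the test vector $C$ in $\mathcal{D}_1$ and once with $C$ in $\mathcal{D}_2$, so as to extract the two components of $\nabla_A B$ separately. For $X,Y\in\mathcal{D}_1$ the $\mathcal{D}_1$-part reproduces $\nabla^{0}_X Y$ together with the conformal terms in $\ln\sigma_2$, while testing against $\mathcal{D}_2$ extracts the normal component coming from the $N_2$-dependence of $\sigma_2$; the pair $V,W$ is then handled by the symmetric substitution $1\leftrightarrow 2$. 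Equivalently, one may bypass Koszul and instead set $\widetilde{\nabla}=\nabla^{0}+\Theta$ with $\Theta$ the proposed correction, check the symmetry of $\Theta$ (torsion-freeness) directly, and verify $A\,g_N(B,C)=g_N(\widetilde{\nabla}_A B,C)+g_N(B,\widetilde{\nabla}_A C)$ in each case; uniqueness of the Levi-Civita connection then finishes the proof.

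The main obstacle is the mixed term $\nabla_X V$ together with the off-diagonal (normal) components of $\nabla_X Y$ and $\nabla_V W$: this is precisely where the two twisting functions interact and where the conformal factors $\sigma_1^{2},\sigma_2^{2}$ enter the passage from a covector coefficient to a gradient, so the intended meaning of $\mathrm{grad}$ and the ratio $\sigma_2^{2}/\sigma_1^{2}$ between the two metrics must be tracked with care. I would therefore pin these terms down by the metric-compatibility test above and sanity-check the outcome against the two specializations already singled out in the text, the twisted product $\sigma_2\equiv 1$ and the warped product with $\sigma_1=\sigma_1(N_1)$, in which the expressions must reduce to the classical formula $\nabla_X V=X(\ln\sigma_1)V$.
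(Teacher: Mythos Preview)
Your proposal is correct and follows essentially the same route as the paper: both introduce the symmetric difference tensor $\Theta=\nabla-\nabla^{0}$ and determine it by metric compatibility, which after cyclic permutation is exactly the Koszul identity you invoke. The paper carries this out explicitly only for $X,Y,Z\in\mathcal{D}_1$ and then appeals to symmetry for the remaining cases, whereas your plan to test separately against $C\in\mathcal{D}_1$ and $C\in\mathcal{D}_2$ (and to track the $\mathcal{D}_2$-component of $\mathrm{grad}(\ln\sigma_2)$) is precisely what is needed to make that ``similarly'' rigorous.
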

\begin{proof}
We set
$
\nabla _{X}Y=\nabla _{X}^{0}Y+\theta_{X}Y,
$
for all $X$, $Y\in \Gamma (T(N_{1}\times N_{2}))$. Then, it is easy to verify that $\theta$ is symmetric, that is, $\theta _{X}Y=\theta_{Y}X$. For any $X$, $Y$, $Z\in \Gamma (T(N_{1}\times N_{2}))$, we have $(\nabla_{X}g)(Y,Z)=0$, that is,
\begin{align*}
0& =X(g(Y,Z))-g(\nabla _{X}Y,Z)-g(Y,\nabla _{X}Z) \\
& =X(g(Y,Z))-g(\nabla _{X}^{0}Y,Z)-g(\theta _{X}Y,Z)-g(Y,\nabla
_{X}^{0}Z)-g(Y,\theta _{X}Z).
\end{align*} 
Since $g_{N}=\sigma _{2}{}^{2}g_{N_{1}}$ along $N_{1}$, so for all $X,Y,Z\in \Gamma(TN_{1})$, we have
\begin{equation*}
0=X(\sigma _{2}{}^{2})g_{N_{1}}(Y,Z)-\sigma _{2}{}^{2}\left\{g_{N_{1}}(\theta _{X}Y,Z)+g_{N_{1}}(Y,\theta _{X}Z)\right\}.
\end{equation*} 
That is, one obtains
\begin{equation}
g_{N_{1}}(\theta _{X}Y,Z)+g_{N_{1}}(Y,\theta _{X}Z)=2X(\ln \sigma
_{2})g_{N_{1}}(Y,Z).  \label{Permu1}
\end{equation} 
A circular permutation in (\ref{Permu1}) gives
\begin{equation}
g_{N_{1}}(\theta _{Y}Z,X)+g_{N_{1}}(Z,\theta _{Y}X)=2Y(\ln \sigma_{2})g_{N_{1}}(Z,X),  \label{Permu2}
\end{equation}
\begin{equation}
g_{N_{1}}(\theta _{Z}X,Y)+g_{N_{1}}(X,\theta _{Z}Y)=2Z(\ln \sigma_{2})g_{N_{1}}(X,Y).  \label{Permu3}
\end{equation} 
Putting the pieces above using the operation (\ref{Permu1}) + (\ref{Permu2}) - (\ref{Permu3}), we have
\begin{align*}
g_{N_{1}}(\theta _{X}Y,Z)& =X(\ln \sigma _{2})g(Y,Z)+Y(\ln \sigma _{2})g(X,Z)
\\
& \;\;\;-g_{N_{1}}(X,Y)g_{N_{1}}(\mathrm{grad}\ln \sigma _{2},Z).
\end{align*}%
Thus
$ 
\theta _{X}Y=X(\ln \sigma _{2})Y + Y(\ln \sigma _{2})X-g_{N_{1}}(X,Y)\mathrm{grad}\ln \sigma_{2},
$  
which proves the relation (\ref{Conn1}). Similarly, we can obtain the relation (\ref{Conn2}). The relation (\ref{Conn3}) follows directly from the formula given in \cite[Proposition 2]{ponge} using the fact that $[X, V]=0$, for any $X \in \mathcal{D}_1$ and $V\in \mathcal{D}_2$.
\end{proof}

In \cite{ponge}, Ponge and Reckziegel gave a characterization of a twisted product pseudo-Riemannian manifold in terms of distributions defined on the
manifolds. In \cite{lopez}, Fernadez-Lopez \textit{et al.} gave a condition for a twisted product manifold to be a warped product manifold by
using the Ricci tensor of the manifold. Similar characterizations were given by Kazan and Sahin in \cite{kazan} by imposing certain conditions on the
Weyl conformal curvature tensor and the Weyl projective tensor of the manifold.

\section{The main results}\label{main}

Let $M_{1}\times _{(\rho _{1},\rho _{2})}M_{2}$ be a doubly twisted product of two Riemannian manifolds $M_{1}$ and $M_{2}$ equipped with Riemannian metrics $g_{M_{1}}$ and $g_{M_{2}}$, respectively, where $\rho _{1}$ and $\rho _{2}$ are two positive smooth functions defined on $M_{1}\times M_{2}$. Let $(\phi _{1},\phi_{2}): N_{1}\times N_{2}\rightarrow M_{1}\times M_{2}$ be a direct product immersion. Define positive functions $\sigma _{i},i=1,2$ on $N_{1}\times N_{2}$ by $\sigma_{i}=\rho_{i}\circ (\phi_{1}\times \phi_{2})$. The map
$
\phi :N_{1}\times_{(\sigma _{1},\sigma _{2})}N_{2}\rightarrow M_{1}\times_{(\rho _{1},\rho _{2})}M_{2},
$
given by $\phi (x_{1},x_{2})=(\phi_{1}(x_{1}),\phi_{2}(x_{2}))$ is an isometric immersion, which is called a \textit{doubly twisted product isometric immersion}.

Denote by $\overline{\mathcal{D}}_{1}$ and $\overline{\mathcal{D}}_{2}$ the distributions obtained from vectors tangent to the horizontal lifts of $%
M_{1} $ and $M_{2}$, respectively. Denote by $\overline{\nabla }$ and $\overline{\nabla }^{0}$ the Levi-Civita connections of $M_{1}\times M_{2}$
associated with the doubly twisted product metric $g_{M}=\rho_{2}{}^{2}g_{M_{1}}+\rho _{1}{}^{2}g_{M_{2}}$ and with the direct product
metric $\overline{g}_{0}=g_{M_{1}}+g_{M_{2}}$, respectively. Then, from Proposition \ref{Propos1}, the Levi-Civita connections $\overline{\nabla }$
and $\overline{\nabla }^{0}$ are related as
\begin{align}\label{levicivitaformulas1}
 \overline{\nabla}_X Y &=  \overline{\nabla}^{0}_{X} Y + X(\ln \rho_2)Y + Y(\ln \rho_2)X\nonumber\\
 & - g_{M}(X,Y) \mathrm{grad}(\ln \rho_2),\\\label{levicivitaformulas2}
  \overline{\nabla}_V W &=  \overline{\nabla}^{0}_{V} W + V(\ln \rho_1)W + W(\ln \rho_1)V \nonumber\\
 & - g_{M}(V,W) \mathrm{grad} (\ln \rho_1),\\\label{levicivitaformulas3}
  \overline{\nabla}_X V &=  \overline{\nabla}_{V} X= V(\ln \rho_1)X + X(\ln \rho_2)V, 
 \end{align}
for all $X,Y\in \overline{\mathcal{D}}_{1}$ and $V,W\in \overline{\mathcal{D}}_{2}$, (cf. \cite{nolker,ponge}).

Let $h^{\phi }$ denote the second fundamental form of a doubly twisted product immersion $\phi :N_{1}\times_{(\sigma _{1},\sigma_{2})}N_{2}\rightarrow M_{1}\times_{(\rho _{1},\rho _{2})}M_{2}$, and let $h^{0}$ denote the second fundamental form of the corresponding direct product immersion $(\phi_{1},\phi _{2}):N_{1}\times N_{2}\rightarrow M_{1}\times M_{2}$, respectively. By applying (\ref{levicivitaformulas1}), (\ref{levicivitaformulas2}), (\ref{levicivitaformulas3}) and Gauss formula, we obtain
\begin{align}\label{Hphi1}
 h^{\phi}(X,Y) &=  h^0(X,Y) + X(\ln \rho_2)Y +Y(\ln \rho_2)X \nonumber\\
 & -g_M(X,Y)D\ln \rho_2,\\\label{Hphi2}
 h^{\phi}(V,W) & =  h^0(V,W) + V(\ln \rho_1)W +W(\ln \rho_1)V \nonumber\\
 & -g_M(V,W)D\ln \rho_1,\\
 h^{\phi}(X,V)&= 0,
 \end{align}
for all $X,Y\in \mathcal{D}_{1}$ and $V,W\in \mathcal{D}_{2}$.

The restriction of $h^{0}$ to $\mathcal{D}_{1}$ and to $\mathcal{D}_{2}$ are the second fundamental forms of $\phi _{1}:N_{1}\rightarrow M_{1}$ and $\phi
_{2}:N_{2}\rightarrow M_{2}$, respectively. Hence, $h^{0}(X,Y)$ and $h^{0}(V,W)$ are mutually orthogonal for $X,Y\in \mathcal{D}_{1}$ and $V,W\in
\mathcal{D}_{2}$.

Let $\phi :N_{1}\times _{(\sigma _{1},\sigma _{2})}N_{2}\rightarrow M$ be an isometric immersion of a doubly twisted product $N_{1}\times _{(\sigma
_{1},\sigma _{2})}N_{2}$ into a Riemannian manifold $M$ of constant sectional curvature $c$. Let $h_{1}$ and $h_{2}$ denote the restrictions of
the second fundamental form $h^{\phi }$ to $\mathcal{D}_{1}$ and $\mathcal{D}_{2}$ respectively. Then $\phi $ is called $N_{i}$\textit{-totally geodesic}
if the partial second fundamental forms $h_{i}$, $i=1,2$ vanish identically. Moreover, $\phi$ is called \textit{mixed totally geodesic} if its second fundamental form $h$ satisfies $h(X,V)=0$, for any $X\in \mathcal{D}_1$ and $V\in \mathcal{D}_2$ (see \cite{chenbook} for more details and reference therein).  
\begin{theorem}
Let $\phi :N_{1}\times _{(\sigma _{1},\sigma _{2})}N_{2}\rightarrow
M_{1}\times _{(\rho _{1},\rho _{2})}M_{2}$ be a doubly twisted product
immersion with $\mathrm{dim}N_{1}=n_{1}$ and $\mathrm{dim}N_{2}=n_{2}$ and
\begin{align*}
&\Psi(\rho_1, \rho_2)  =  2\sum_{i,j=1}^{n_1} 2g_N\left\{h^0(e_i,e_j),e_i(\ln \rho_2)e_j + e_j(\ln \rho_2)e_i\right\}\\
&  +\sum_{i,j=1}^{n_1} g_N\left\{e_i(\ln \rho_2)e_j+e_j(\ln \rho_2)e_i,e_i(\ln \rho_2)e_j+e_j(\ln \rho_2)e_i\right\}\\
& -2 \sum_{i,j=1}^{n_1}g_N\left\{h^0(e_i,e_j)+e_i(\ln \rho_2)e_j+e_j(\ln \rho_2)e_i,g_N(e_i,e_j)D\ln \rho_2\right\}\\
&  + 2\sum_{\alpha,\beta=n_1+1}^{n_1+n_2}g_N\left\{h^0(e_{\alpha},e_{\beta}),e_{\alpha}(\ln \rho_1)e_{\beta} + e_{\beta}(\ln \rho_1)e_{\alpha}\right\}\\
& + \sum_{\alpha,\beta=n_1+1}^{n_1+n_2} g_N\left\{e_{\alpha}(\ln \rho_1)e_{\beta} +e_{\beta}(\ln \rho_1)e_{\alpha},e_{\alpha}(\ln \rho_1)e_{\beta} + e_{\beta}(\ln \rho_1)e_{\alpha}\right\}\\
&  -2\sum_{\alpha,\beta=n_1+1}^{n_1+n_2}g_N\left\{h^0(e_{\alpha},e_{\beta}) + e_{\alpha}(\ln \rho_1)e_{\beta} + e_{\beta}(\ln\rho_1)e_{\alpha},g_N(e_{\alpha},e_{\beta})D\ln \rho_1\right\}. 
\end{align*} 
Then the following statements hold:
\begin{enumerate}
\item[(i)] The squared norm of the second fundamental form of $\phi$ satisfies
\begin{equation*}
\Vert h^{\phi }\Vert ^{2}\geq n_{1}\Vert D(\ln \rho _{2})\Vert^{2}+n_{2}\Vert D(\ln \rho _{1})\Vert ^{2}+\Psi (\rho _{1},\rho_{2}).
\end{equation*}
\item[(ii)] $\phi $ is $N_{1}$-totally geodesic if and only $\phi_{1}:N_{1}\rightarrow M_{1}$ is totally geodesic and $X(\ln \rho_{2})Y+Y(\ln \rho _{2})X=g_{M}(X,Y)D\ln \rho _{2}$.
\item[(iii)] $\phi $ is $N_{2}$-totally geodesic if and only $\phi_{2}:N_{2}\rightarrow M_{2}$ is totally geodesic and $V(\ln \rho
_{1})W+W(\ln \rho _{1})V=g_{M}(V,W)D\ln \rho _{1}$.
\item[(iv)] $\phi $ is a totally geodesic immersion if and only if $\phi $ is both $N_{i}$-totally geodesics.
\end{enumerate}
\end{theorem}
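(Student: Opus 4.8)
The plan is to derive all four statements from the three structural formulas (\ref{Hphi1}), (\ref{Hphi2}) and the vanishing $h^{\phi}(X,V)=0$ recorded above, together with the fact that $h^{0}|_{\mathcal{D}_1}$ is the second fundamental form of $\phi_1$ and $h^{0}|_{\mathcal{D}_2}$ that of $\phi_2$. First I would fix a local orthonormal frame $(e_1,\dots,e_{n_1},e_{n_1+1},\dots,e_{n_1+n_2})$ of $TN$ adapted to the splitting $TN=\mathcal{D}_1\oplus\mathcal{D}_2$, with $e_i\in\mathcal{D}_1$ for $1\le i\le n_1$ and $e_{\alpha}\in\mathcal{D}_2$ for $n_1+1\le \alpha\le n_1+n_2$.

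For (i), I would write $\|h^{\phi}\|^2=\sum_{a,b}g_N(h^{\phi}(e_a,e_b),h^{\phi}(e_a,e_b))$ and split the double sum into the $\mathcal{D}_1$-block, the $\mathcal{D}_2$-block, and the mixed block. The mixed block vanishes because $h^{\phi}(X,V)=0$, so only the two pure blocks survive. On the $\mathcal{D}_1$-block I substitute (\ref{Hphi1}) and expand $\|h^0(e_i,e_j)+e_i(\ln\rho_2)e_j+e_j(\ln\rho_2)e_i-g_N(e_i,e_j)D\ln\rho_2\|^2$ into the three square terms and three cross terms of its summands. The decisive bookkeeping is that $\sum_{i,j}g_N(e_i,e_j)^2\|D\ln\rho_2\|^2=\sum_{i,j}\delta_{ij}\|D\ln\rho_2\|^2=n_1\|D\ln\rho_2\|^2$, which produces the leading term; the three cross terms together with $\|e_i(\ln\rho_2)e_j+e_j(\ln\rho_2)e_i\|^2$ assemble into the first three lines of $\Psi(\rho_1,\rho_2)$. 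The $\mathcal{D}_2$-block is treated identically via (\ref{Hphi2}), giving $n_2\|D\ln\rho_1\|^2$ and the last three lines of $\Psi$. What is left over is the nonnegative quantity $\sum_{i,j}\|h^0(e_i,e_j)\|^2+\sum_{\alpha,\beta}\|h^0(e_\alpha,e_\beta)\|^2\ge 0$; discarding it converts the resulting equality into the claimed inequality.

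For (ii) and (iii), I would unwind the definition: $\phi$ is $N_1$-totally geodesic exactly when $h^{\phi}(X,Y)=0$ for all $X,Y\in\mathcal{D}_1$, i.e. by (\ref{Hphi1}) when
\[
h^0(X,Y)+\big[X(\ln\rho_2)Y+Y(\ln\rho_2)X-g_M(X,Y)D\ln\rho_2\big]=0.
\]
The key point is that $h^0(X,Y)$, being the second fundamental form of $\phi_1:N_1\to M_1$, takes values in the $\phi_1$-normal directions inside $M_1$, while the bracketed twisting term is built from $\phi_*\mathcal{D}_1$ and $D\ln\rho_2$; since these lie in complementary summands the vector identity forces each to vanish separately. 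Hence $h_1\equiv 0$ is equivalent to $h^0|_{\mathcal{D}_1}=0$ (that is, $\phi_1$ totally geodesic) together with $X(\ln\rho_2)Y+Y(\ln\rho_2)X=g_M(X,Y)D\ln\rho_2$; statement (iii) is the mirror image from (\ref{Hphi2}) with the two factors interchanged. For (iv) I would observe that $h^{\phi}$ is blockwise $h_1$ on $\mathcal{D}_1$, $h_2$ on $\mathcal{D}_2$, and identically zero on the mixed part, so $h^{\phi}\equiv 0$ if and only if $h_1\equiv 0$ and $h_2\equiv 0$, which is precisely the conjunction of (ii) and (iii).

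The main obstacle I anticipate is the independence argument underlying the splitting in (ii) and (iii): one must locate $D\ln\rho_2$ (the normal projection of $\mathrm{grad}\ln\rho_2$) relative to the $\phi_1$-normal directions carrying $h^0$, so as to conclude that $h^0(X,Y)$ and the twisting term cannot cancel and must each vanish. The remainder is routine expansion and the nonnegativity of a sum of squares.
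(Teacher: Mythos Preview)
Your proposal is correct and follows essentially the same route as the paper: expand $\|h^{\phi}\|^{2}$ over an adapted orthonormal frame, use $h^{\phi}(X,V)=0$ to drop the mixed block, substitute (\ref{Hphi1})--(\ref{Hphi2}) and isolate the nonnegative term $\sum\|h^{0}\|^{2}$ for (i); then for (ii)--(iii) set the relevant block of $h^{\phi}$ to zero and invoke the orthogonality of $h^{0}(X,Y)$ and $D\ln\rho_{2}$ (respectively $h^{0}(V,W)$ and $D\ln\rho_{1}$) to separate the two pieces, with (iv) following immediately. The independence issue you flag is exactly the one the paper handles by that orthogonality claim.
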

\begin{proof}
(i) is proven as follows. Let $e_{i}\in \mathcal{D}_{i}$, $i=1,\cdots ,n_{1}$ and $e_{\alpha }\in\mathcal{D}_{\alpha }$, $\alpha =n_{1}+1,\cdots,n_{1}+n_{2}$ be orthonormal frame fields of $N_{1}\times _{(\sigma _{1},\sigma _{2})}N_{2}$. Then we have
\begin{align*}
\Vert h^{\phi }\Vert ^{2} &= \sum_{a,b=1}^{n_{1}+n_{2}}g_{N}\left( h^{\phi}(e_{a},e_{b}),h^{\phi }(e_{a},e_{b})\right) \\ 
&= \sum_{i,j=1}^{n_{1}}g_{N}\left(h^{0}(e_{i},e_{j}),h^{0}(e_{i},e_{j})\right) \\
& +\sum_{\alpha ,\beta =n_{1}+1}^{n_{1}+n_{2}}g_{N}\left( h^{0}(e_{\alpha},e_{\beta }),h^{0}(e_{\alpha },e_{\beta })\right) \\
& +\sum_{i,j=1}^{n_{1}}g_{N}\left( g(e_{i},e_{j})D\ln \rho_{2},g(e_{i},e_{j})D\ln \rho _{2}\right) \\
& +\sum_{\alpha ,\beta =n_{1}+1}^{n_{1}+n_{2}}g_{N}\left(g(e_{\alpha}, e_{\beta })D\ln \rho _{1},g(e_{\alpha },e_{\beta })D\ln \rho _{1}\right) \\
& +\Psi (\rho _{1},\rho _{2}).
\end{align*}
 If $\phi:N_1 \times_{(\sigma_1,\sigma_2)} N_2 \rightarrow M$ is a $N_i$-totally geodesic immersion, then it follows from (\ref{Hphi1}) and (\ref{Hphi2}) that
 \begin{align*}
  0 &=  h^0(X,Y) + X(\ln \rho_2)Y +Y(\ln \rho_2)X -g_M(X,Y)D\ln \rho_2,\\
  0 &=  h^0(V,W) + V(\ln \rho_1)W +W(\ln \rho_1)V -g_M(V,W)D\ln \rho_1.
 \end{align*}
Since $h^0(X,Y), D\ln \rho_2$ and $h^0(V,W), D\ln \rho_1$ are orthogonal, respectively, we have
\begin{eqnarray*}
 h^0(X,Y) =0, \quad h^0(V,W) =0,
\end{eqnarray*}
that is, $\phi_1$ and $\phi_2$ are totally geodesic, and
\begin{align*}
 0 &=  X(\ln \rho_2)Y +Y(\ln \rho_2)X -g_M(X,Y)D\ln \rho_2,\\
  0 &=  V(\ln \rho_1)W +W(\ln \rho_1)V -g_M(V,W)D\ln \rho_1.
\end{align*}
Conversely, if $\phi_1$ and $\phi_2$ are totally geodesic and 
\begin{align*}
 0 & = X(\ln \rho_2)Y +Y(\ln \rho_2)X -g_M(X,Y)D\ln \rho_2,\\
  0 & = V(\ln \rho_1)W +W(\ln \rho_1)V -g_M(V,W)D\ln \rho_1.
\end{align*}
It follows from (\ref{Hphi1}) and (\ref{Hphi2}) that
$$
h^{\phi}(X,Y)= 0 \;\;\mbox{and}\;\; h^{\phi}(V,W)=0,
$$
for $X,Y\in \mathcal{D}_1$ and $V,W\in \mathcal{D}_2$. This completes the proof of (ii) and (iii). The statement (iv) follows from (ii) and (iii). 
\end{proof}

For a doubly warped product immersion, that is, when $\sigma_1$ (respectively $\sigma_2$) depends only on $N_1$ (respectively $N_2$) and $\rho_1$ (respectively 
$\rho_2$) depends only on $M_1$ (respectively $M_2$), then we have the following
result by Faghfouri and Majidi \cite{faghfouri}.
\begin{corollary}\cite[Theorem 1]{faghfouri} 
Let $\phi :(\phi _{1},\phi _{2}):N_{1}\times_{(\sigma_{1},\sigma_{2})}N_{2}\rightarrow M_{1}\times _{(\rho _{1},\rho _{2})}M_{2}$ be a
doubly warped product immersion between two doubly warped product manifolds. Then the following statements are true:
\begin{enumerate}
\item[(i)] $\phi$ is mixed totally geodesic.
\item[(ii)] The squared norm of the second fundamental form of $\phi $ satisfies
\begin{equation*}
\left\Vert h^{\phi }\right\Vert ^{2}\geq n_{1}\left\Vert D\ln \rho
_{2}\right\Vert ^{2}+n_{2}\left\Vert D\ln \rho _{1}\right\Vert ^{2}
\end{equation*}
with equality holding if and only if $\phi _{1}:N_{1}\rightarrow M_{1}$ and $\phi _{2}:N_{2}\rightarrow M_{2}$ are both totally geodesic immersions.
\item[(iii)] $\phi $ is $N_{1}$-totally geodesic if and only if $\phi_{1}:N_{1}\rightarrow M_{1}$ is totally geodesic and $D\ln \rho _{2}=0$.
\item[(iv)] $\phi $ is $N_{2}$-totally geodesic if and only if $\phi_{2}:N_{2}\rightarrow M_{2}$ is totally geodesic and $D\ln \rho _{1}=0$.
\item[(v)] $\phi $ is a totally geodesic immersion if and only if $\phi$ is both $N_{1}$-totally geodesic and $N_{2}$-totally geodesic.
\end{enumerate}
\end{corollary}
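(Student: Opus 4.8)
The plan is to derive the Corollary directly from the main Theorem by exploiting the defining feature of a doubly warped product: $\rho_2$ depends only on $M_2$ and $\rho_1$ only on $M_1$. Working with the adapted orthonormal frame $\{e_i\}_{i=1}^{n_1}\subset\mathcal{D}_1$, $\{e_\alpha\}_{\alpha=n_1+1}^{n_1+n_2}\subset\mathcal{D}_2$ used in the Theorem, the first step is to record the two vanishing relations $e_i(\ln\rho_2)=0$ and $e_\alpha(\ln\rho_1)=0$: each $e_i$ pushes forward to a vector tangent to $M_1$, along which $\rho_2$ is constant, and each $e_\alpha$ to a vector tangent to $M_2$, along which $\rho_1$ is constant. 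In the same vein $D\ln\rho_2$ lies in the $M_2$-directions and $D\ln\rho_1$ in the $M_1$-directions, so $D\ln\rho_2$ is orthogonal to every $h^0(e_i,e_j)$ (which is normal to $M_1$) and $D\ln\rho_1$ to every $h^0(e_\alpha,e_\beta)$.

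Part (i) is then immediate, since $h^\phi(X,V)=0$ for $X\in\mathcal{D}_1$, $V\in\mathcal{D}_2$ was already established for arbitrary doubly twisted products; mixed total geodesy holds a fortiori.

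For part (ii) I would substitute the two vanishing relations into $\Psi(\rho_1,\rho_2)$. Every summand either carries an explicit factor $e_i(\ln\rho_2)$ or $e_\alpha(\ln\rho_1)$, or (the pieces of the third and sixth sums free of such a factor) reduces to $g_N(h^0(e_i,e_i),D\ln\rho_2)$ or $g_N(h^0(e_\alpha,e_\alpha),D\ln\rho_1)$, which vanish by the orthogonality noted above; hence $\Psi(\rho_1,\rho_2)=0$. The frame identity inside the Theorem's proof then collapses to
\begin{equation*}
\Vert h^\phi\Vert^2=\Vert h^0|_{\mathcal{D}_1}\Vert^2+\Vert h^0|_{\mathcal{D}_2}\Vert^2+n_1\Vert D\ln\rho_2\Vert^2+n_2\Vert D\ln\rho_1\Vert^2,
\end{equation*}
using $\sum_{i,j}g_N(e_i,e_j)^2=n_1$ and $\sum_{\alpha,\beta}g_N(e_\alpha,e_\beta)^2=n_2$. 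Since $h^0|_{\mathcal{D}_1}$ and $h^0|_{\mathcal{D}_2}$ are the second fundamental forms of $\phi_1$ and $\phi_2$, the stated inequality follows, and equality holds exactly when $\Vert h^0|_{\mathcal{D}_1}\Vert^2=\Vert h^0|_{\mathcal{D}_2}\Vert^2=0$, that is, when $\phi_1$ and $\phi_2$ are totally geodesic.

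For (iii) and (iv) I would specialize parts (ii) and (iii) of the Theorem. There $N_1$-total geodesy is equivalent to $\phi_1$ being totally geodesic together with $X(\ln\rho_2)Y+Y(\ln\rho_2)X=g_M(X,Y)D\ln\rho_2$; under the doubly warped hypothesis the left-hand side vanishes, leaving $g_M(X,Y)D\ln\rho_2=0$ for all $X,Y\in\mathcal{D}_1$, and taking $X=Y$ a unit field forces $D\ln\rho_2=0$, the converse being trivial. The mirror-image argument with $\rho_1$ and $\mathcal{D}_2$ yields (iv), and (v) is either part (iv) of the Theorem verbatim or the combination of (i), (iii) and (iv). I do not anticipate a genuine obstacle here: the only delicate points are the bookkeeping that makes $\Psi$ disappear---in the third and sixth sums the pieces free of a derivative factor vanish by $M_1/M_2$ orthogonality rather than by a vanishing derivative---and the elementary step of extracting $D\ln\rho_2=0$ from the umbilicity-type identity.
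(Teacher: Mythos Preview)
Your proposal is correct and is precisely the approach the paper intends: the Corollary is presented immediately after the main Theorem as the specialization obtained when $\rho_1$ depends only on $M_1$ and $\rho_2$ only on $M_2$, and the paper gives no further proof beyond that remark. You have supplied exactly the missing bookkeeping---the vanishing of $e_i(\ln\rho_2)$ and $e_\alpha(\ln\rho_1)$, the $M_1/M_2$ orthogonality that kills the residual cross terms in $\Psi$, and the reduction of the umbilicity-type condition to $D\ln\rho_i=0$---so your derivation is a faithful expansion of what the paper leaves implicit.
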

In the case of warped product immersion, we have the following result of Chen.
\begin{corollary}\label{Chen} \cite[Theorem 1]{chen5} 
Let $\phi :(\phi _{1},\phi_{2}):N_{1}\times_{\sigma}N_{2}\rightarrow M_{1}\times _{\rho }M_{2}$ be a
warped product immersion between two warped product manifolds. Then the following statements are true:
\begin{enumerate}
\item[(i)] $\phi$ is mixed totally geodesic.
\item[(ii)] The squared norm of the second fundamental form of $\phi $ satisfies
\begin{equation*}
\left\Vert h^{\phi }\right\Vert ^{2}\geq n_{2}\left\Vert D\ln \rho
\right\Vert ^{2}
\end{equation*}
with equality holding if and only if $\phi_{1}:N_{1}\rightarrow M_{1}$ and $\phi _{2}:N_{2}\rightarrow M_{2}$ are both totally geodesic immersions.
\item[(iii)] $\phi $ is $N_{1}$-totally geodesic if and only if $\phi_{1}:N_{1}\rightarrow M_{1}$ is totally geodesic.
\item[(iv)] $\phi $ is $N_{2}$-totally geodesic if and only if $\phi_{2}:N_{2}\rightarrow M_{2}$ is totally geodesic and $(\nabla \ln \rho)|_{N_{1}}=\nabla \ln f$ holds, that is, the restriction of the gradient of $\ln \rho $ to $N_{1}$ is the gradient of $\ln f$, or equivalently, $D\ln\rho =0$.
\item[(v)] $\phi $ is a totally geodesic immersion if and only if $\phi $ is both $N_{1}$-totally geodesic and $N_{2}$-totally geodesic.
\end{enumerate}
\end{corollary}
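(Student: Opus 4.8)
The plan is to obtain the corollary as the warped-product specialization of the structural formulas (\ref{Hphi1}), (\ref{Hphi2}) and $h^{\phi}(X,V)=0$ established before the main theorem. Here the twisting data degenerate to $\sigma_2\equiv 1$, $\rho_2\equiv 1$, while $\sigma_1=\sigma$ and $\rho_1=\rho$ depend only on the first factors $N_1$ and $M_1$. First I would feed these choices into the three equations. Since $\ln\rho_2=0$, every term carrying $\rho_2$ drops out, giving $h^{\phi}(X,Y)=h^0(X,Y)$ for $X,Y\in\mathcal{D}_1$. Because $\rho\circ\phi_1$ depends only on $N_1$ while $V,W\in\mathcal{D}_2$ are tangent to the lift of $N_2$, the directional derivatives $V(\ln\rho)$ and $W(\ln\rho)$ vanish, so (\ref{Hphi2}) collapses to $h^{\phi}(V,W)=h^0(V,W)-g_M(V,W)\,D\ln\rho$. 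The vanishing of the mixed term $h^{\phi}(X,V)$ is precisely item (i), mixed total geodesy.

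The key structural observation, on which the remaining items rest, is an orthogonality of the same $TM_1\perp TM_2$ type already noted in the text: $h^0(V,W)$ is the second fundamental form of $\phi_2:N_2\to M_2$, hence lies in the normal bundle of $N_2$ inside $M_2$ and is tangent to $M_2$; whereas $D\ln\rho$, built from $\mathrm{grad}\ln\rho$ with $\rho$ a function on $M_1$, is tangent to $M_1$. Thus $g_M(h^0(V,W),D\ln\rho)=0$. For item (ii) I would expand $\Vert h^{\phi}\Vert^2$ over an adapted orthonormal frame $\{e_i\}\cup\{e_\alpha\}$. The mixed contributions vanish; the $\mathcal{D}_1$-block contributes $\sum_{i,j}\Vert h^0(e_i,e_j)\Vert^2$; and in the $\mathcal{D}_2$-block the cross term drops by orthogonality, leaving $\sum_{\alpha,\beta}\Vert h^0(e_\alpha,e_\beta)\Vert^2+\big(\sum_{\alpha,\beta}g_M(e_\alpha,e_\beta)^2\big)\Vert D\ln\rho\Vert^2$. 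Since the frame is orthonormal and $\phi$ is isometric, $\sum_{\alpha,\beta}g_M(e_\alpha,e_\beta)^2=n_2$, so $\Vert h^{\phi}\Vert^2=\sum_{i,j}\Vert h^0(e_i,e_j)\Vert^2+\sum_{\alpha,\beta}\Vert h^0(e_\alpha,e_\beta)\Vert^2+n_2\Vert D\ln\rho\Vert^2\geq n_2\Vert D\ln\rho\Vert^2$, with equality exactly when both sums of $\Vert h^0\Vert^2$-terms vanish, i.e. when $\phi_1$ and $\phi_2$ are both totally geodesic.

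For items (iii) and (iv) I would read off total geodesy of the partial second fundamental forms from the same two formulas. Since $h^{\phi}(X,Y)=h^0(X,Y)$, the immersion is $N_1$-totally geodesic precisely when $h^0$ vanishes on $\mathcal{D}_1$, i.e. when $\phi_1$ is totally geodesic; this is (iii). For (iv), $h^{\phi}(V,W)=h^0(V,W)-g_M(V,W)D\ln\rho$ is a sum of two mutually orthogonal pieces, so it vanishes identically iff both pieces vanish: $h^0(V,W)=0$ for all $V,W$ (total geodesy of $\phi_2$) and $g_M(V,W)D\ln\rho=0$, the latter forcing $D\ln\rho=0$ on taking $V=W$ a unit field. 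Finally, because $h^{\phi}(X,V)=0$ always, $h^{\phi}$ vanishes identically iff it vanishes on $\mathcal{D}_1$ and on $\mathcal{D}_2$ separately, which is (v). I expect no genuine obstacle; the only point requiring care is the orthogonality claim pairing $h^0(V,W)\in TM_2$ against $D\ln\rho\in TM_1$, which both underlies the clean split in (ii) and is what makes the two conditions in (iv) independent.
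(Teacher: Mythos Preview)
Your proposal is correct and follows precisely the route the paper intends: the corollary is stated without separate proof, as the warped-product specialization ($\rho_2\equiv 1$, $\rho_1=\rho$ depending only on $M_1$) of the structural formulas (\ref{Hphi1})--(\ref{Hphi2}) and the orthogonality argument used in the proof of the main theorem. Your identification of the key point---that $h^0(V,W)\in TM_2$ while $D\ln\rho\in TM_1$, making the two pieces in $h^{\phi}(V,W)$ orthogonal---matches exactly the mechanism the paper exploits for items (ii)--(iv).
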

Let $\phi :N_{1}\times_{(\sigma _{1},\sigma _{2})}N_{2}\rightarrow M$ be an isometric immersion of a doubly twisted product $N_{1}\times _{(\sigma
_{1},\sigma _{2})}N_{2}$ into a Riemannian manifold $M$ with constant sectional curvature $c$. Denote by $\mathrm{trace}\,h_{1}$ and $\mathrm{trace%
}\,h_{2}$ the trace of $h_{i}$, $i=1,2$ restricted to $N_{1}$ and $N_{2}$, respectively, that is
\begin{equation*}
\mathrm{trace}\,h_{1}=\sum_{\alpha =1}^{n_{1}}h(e_{\alpha },e_{\alpha}),\;\; \mbox{and}\;\; \mathrm{trace}\,h_{2}=\sum_{t=n_{1}+1}^{n_{1}+n_{2}}h(e_{t},e_{t}),
\end{equation*}
for some orthonormal frame fields $e_{1},\cdots ,e_{n_{1}}$ and $e_{n_{1}+1},\cdots ,e_{n_{1}+n_{2}}$ of $\mathcal{D}_{1}$ and $\mathcal{D}_{2}$, respectively. The partial mean curvature vectors $H_{i}$ is defined
by
\begin{equation*}
H_{1}=\frac{1}{n_{1}}\mathrm{trace}\,h_{1}\quad \mathrm{and}\quad H_{2}=\frac{1}{n_{2}}\mathrm{trace}\,h_{2}.
\end{equation*} 
An immersion $\phi :N_{1}\times _{(\sigma _{1},\sigma _{2})}N_{2}\rightarrow M$ is called $N_{i}$\textit{-minimal} if the partial mean curvatures $H_{i},i=1,2$ vanish identically \cite{chenbook}.

In the sequel, we need the following lemma.
\begin{lemma}\cite{gos}\label{LemmaD}
 Let $f$ be a smooth function that take its arguments from some product space $N_{1}\times N_{2}$. Then, normal component $Df$ of the gradient of $f$ on $N_{1}\times N_{2}$ can be decomposed as 
 $$
 Df = D_{1}f + D_{2}f,
 $$
 where $D_{i}f$ are normal components of the gradients of $f$ on $N_{i}$, $i=1,2$.
\end{lemma}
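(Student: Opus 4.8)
The plan is to reduce the statement to two orthogonal decompositions --- one of the tangent bundle of the domain $N_{1}\times N_{2}$, the other of the normal bundle of the immersion --- and then to invoke linearity of the normal projection. First I would use the product structure of the metric on $N=N_{1}\times N_{2}$. Since the tangent bundle splits orthogonally as $TN=\mathcal{D}_{1}\oplus \mathcal{D}_{2}$, the gradient of $f$ on $N$ splits as $\mathrm{grad}\,f=\mathrm{grad}_{1}f+\mathrm{grad}_{2}f$, where $\mathrm{grad}_{i}f$ denotes the $\mathcal{D}_{i}$-component. This is the elementary fact that on a Riemannian product the gradient is the sum of the partial gradients: for any $Z\in \mathcal{D}_{i}$ one has $g_{N}(\mathrm{grad}\,f,Z)=Z(f)$, and because $\mathcal{D}_{1}\perp \mathcal{D}_{2}$ the $\mathcal{D}_{i}$-component of $\mathrm{grad}\,f$ is exactly the gradient of $f$ taken along the $N_{i}$-directions. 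This step needs nothing beyond orthogonality of the two distributions.

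Next I would apply the normal projection $(\cdot)^{\perp}$ onto the normal bundle of the immersion. As this projection is pointwise linear, $Df=(\mathrm{grad}\,f)^{\perp}=(\mathrm{grad}_{1}f)^{\perp}+(\mathrm{grad}_{2}f)^{\perp}$. It remains to identify each summand with $D_{i}f$. Here I would use that the ambient space is the product $M_{1}\times M_{2}$ and that the immersion is the direct product $\phi_{1}\times \phi_{2}$, so that the normal bundle also splits orthogonally into the normal bundles of $\phi_{1}$ and $\phi_{2}$. Since $\mathrm{grad}_{i}f\in \mathcal{D}_{i}$ is carried by $\phi$ into the $M_{i}$-factor, its normal component lies in the normal bundle of $\phi_{i}$ and coincides with $D_{i}f$, the normal component of the $N_{i}$-gradient of $f$.

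The one point requiring genuine care is the compatibility of the two orthogonal splittings: I must check that the normal projection does not mix the $M_{1}$- and $M_{2}$-directions. This is precisely where the hypothesis that both the ambient manifold and the immersion are direct products is used --- it guarantees that the splitting $TM=TM_{1}\oplus TM_{2}$ restricts to compatible tangential and normal splittings along $\phi(N)$, so that the projection respects the product decomposition. Once this alignment is in place, the conclusion $Df=D_{1}f+D_{2}f$ is immediate from linearity, and I anticipate no analytic difficulty, only the bookkeeping needed to keep the domain splitting $\mathcal{D}_{1}\oplus \mathcal{D}_{2}$ aligned with the normal splitting along the immersion.
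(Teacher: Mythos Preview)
The paper does not give its own proof of this lemma; it simply quotes it from the reference \cite{gos} and uses it as a black box. So there is nothing in the paper to compare your argument against, and your outline is in fact more than the paper supplies.

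Your overall strategy---split into the two factors using the product structure, then use linearity of the normal projection and the fact that for a direct product immersion $\phi_{1}\times\phi_{2}:N_{1}\times N_{2}\to M_{1}\times M_{2}$ the normal bundle splits as $\nu(\phi_{1})\oplus\nu(\phi_{2})$---is the right one. There is, however, a terminological slip you should fix: you repeatedly write ``the gradient of $f$ on $N$'' and place $\mathrm{grad}_{i}f$ in $\mathcal{D}_{i}\subset TN$. If the gradient were really tangent to $N$, its normal component would vanish and the lemma would be trivial. In the paper's usage (applied to $\ln\rho_{1}$, $\ln\rho_{2}$), $f$ is a function on the ambient product $M_{1}\times M_{2}$, the gradient is the \emph{ambient} gradient, and $Df$ denotes its component normal to $\phi(N)$. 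The decomposition you want is therefore $T(M_{1}\times M_{2})=TM_{1}\oplus TM_{2}$, giving $\mathrm{grad}_{M}f=\mathrm{grad}_{M_{1}}f+\mathrm{grad}_{M_{2}}f$, after which your normal--bundle argument goes through verbatim. Once you rephrase the first paragraph with the ambient gradient, the proof is correct.
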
 
\begin{theorem}
Let $\phi :N_{1}\times _{(\sigma _{1},\sigma _{2})}N_{2}\rightarrow M_{1}\times _{(\rho _{1},\rho _{2})}M_{2}$ be a doubly twisted product
immersion between two doubly twisted product manifolds. Then the following statements are true:
\begin{enumerate}
\item[(i)] $\phi$ is $N_1$-minimal if and only $\phi_1:N_1\rightarrow M_1$ is a minimal isometric, $\displaystyle n_{1} \sigma_{2}^{2} D_{1}\ln \rho _{2} = 2\sum_{i=1}^{n_{1}}e_{i}(\ln\rho _{2})e_{i}\;\;$ and $\;\;D_{2}\ln \rho _{2}=0$.
\item[(ii)]  $\phi$ is $N_2$-minimal if and only $\phi_2:N_2\rightarrow M_2$ is a
minimal isometric, $\displaystyle n_2 \sigma_{1}^{2} D_2\ln \rho _{1} = 2\sum_{\alpha=1}^{n_{2}}e_{\alpha}(\ln\rho _{1})e_{\alpha}\;\;$
and $\;\;D_{1}\ln \rho_{1}=0$.
\item[(iii)]  $\phi $ is a minimal immersion if and only if the mean curvature
vectors of $\phi _{1}$ and $\phi _{2}$ are given by 
\begin{equation*}
n_{1}^{-1} n_2\sigma^{2}_{1}D_1\ln \rho_{1} + \sigma^{2}_{2}D_1\ln \rho_{2} -2n_{1}^{-1}\sum_{a=1}^{n_{1}}e_{a}(\ln \rho_{2})e_{a},
\end{equation*}
and 
\begin{equation*}
n_{1}n_{2}^{-1} \sigma^{2}_{1}D_2\ln \rho_{2} +  \sigma^{2}_{1}D_2\ln \rho_{1}-2n_{2}^{-1}\sum_{\alpha=1}^{n_{2}}e_{\alpha}(\ln \rho_{2})e_{\alpha},
\end{equation*} 
respectively.
\end{enumerate}
\end{theorem}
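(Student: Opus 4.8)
The plan is to read off both partial mean curvature vectors directly from the second fundamental form formulas (\ref{Hphi1}) and (\ref{Hphi2}), impose the vanishing condition, and split the resulting vector equation into its natural summands. Concretely, I would fix a $g_N$-orthonormal frame $e_1,\dots,e_{n_1}$ of $\mathcal{D}_1$ and $e_{n_1+1},\dots,e_{n_1+n_2}$ of $\mathcal{D}_2$, set $X=Y=e_i$ in (\ref{Hphi1}), and sum over $i$. Using $g_M(e_i,e_i)=g_N(e_i,e_i)=1$ from the isometry, this gives
\[
\mathrm{trace}\,h_1=\sum_{i=1}^{n_1}h^0(e_i,e_i)+2\sum_{i=1}^{n_1}e_i(\ln\rho_2)e_i-n_1\,D\ln\rho_2 ,
\]
and the analogous trace of (\ref{Hphi2}) over $\mathcal{D}_2$ produces $\mathrm{trace}\,h_2$ with $\rho_1$ replacing $\rho_2$. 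Since $H_i=n_i^{-1}\mathrm{trace}\,h_i$, the $N_i$-minimality condition of (i) and (ii) is exactly $\mathrm{trace}\,h_i=0$.

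The next step is to decompose these equations. Lemma \ref{LemmaD} splits each normal gradient as $D\ln\rho_2=D_1\ln\rho_2+D_2\ln\rho_2$, with $D_1\ln\rho_2$ normal to $N_1$ in $M_1$ and $D_2\ln\rho_2$ normal to $N_2$ in $M_2$. The ambient bundle decomposes orthogonally as $TM=(\mathcal{D}_1\oplus\nu_1)\oplus(\mathcal{D}_2\oplus\nu_2)$, where $\nu_i$ is the normal bundle of $\phi_i$, and the three kinds of terms in $\mathrm{trace}\,h_1$ occupy different summands: $\sum_i h^0(e_i,e_i)$ and $D_1\ln\rho_2$ sit in $\nu_1$, the linear term $\sum_i e_i(\ln\rho_2)e_i$ sits in $\mathcal{D}_1$, and $D_2\ln\rho_2$ sits in $\nu_2$. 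Matching the $\nu_2$-part of $\mathrm{trace}\,h_1=0$ forces $D_2\ln\rho_2=0$; matching the remaining components carried by the $M_1$-factor separates into the vanishing of the $h^0$ contribution, which is the minimality of $\phi_1$, and the stated balance $n_1\sigma_2^2\,D_1\ln\rho_2=2\sum_i e_i(\ln\rho_2)e_i$ coming from the twisting gradient against the linear term. This establishes (i); the symmetric computation on $\mathcal{D}_2$ establishes (ii). In both cases the converse is immediate, since substituting these identities back into (\ref{Hphi1})--(\ref{Hphi2}) makes the relevant partial trace vanish.

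For (iii), $\phi$ is minimal exactly when $\mathrm{trace}\,h_1+\mathrm{trace}\,h_2=0$. Because the $M_1$- and $M_2$-factors of $TM$ are orthogonal, this single equation splits into a $TM_1$-part and a $TM_2$-part; I would solve the $TM_1$-part for $\sum_i h^0(e_i,e_i)$, convert it to the mean curvature vector $H^{\phi_1}$ of $\phi_1$, and likewise treat the $TM_2$-part to recover $H^{\phi_2}$, using Lemma \ref{LemmaD} on both $D\ln\rho_1$ and $D\ln\rho_2$. The routine parts are the two traces and the term-by-term matching; the genuine obstacle is the careful bookkeeping of the twisting-function factors $\sigma_1^2,\sigma_2^2$. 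These enter precisely because a $g_N$-orthonormal frame is \emph{not} orthonormal for the base metrics $g_{N_i}$, so passing from $\sum_i h^0(e_i,e_i)$ to the honest mean curvature of $\phi_i$ (measured with $g_{M_i}$ rather than $g_M$) requires rescaling the frame and reintroducing the factors $\sigma_i^2$. Getting these scalings right is exactly what pins down the constants in the two displayed expressions for $H^{\phi_1}$ and $H^{\phi_2}$.
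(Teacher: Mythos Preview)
Your approach is essentially the paper's own: trace (\ref{Hphi1}) and (\ref{Hphi2}) over a $g_N$-orthonormal frame, invoke Lemma~\ref{LemmaD} to split $D\ln\rho_j = D_1\ln\rho_j + D_2\ln\rho_j$, and then separate the resulting vector equation along the $M_1$/$M_2$ factors to extract the stated conditions; for (iii) you add the two partial traces and split again. Your extra remark about the $\sigma_i^2$ rescaling when passing from a $g_N$-orthonormal frame to the genuine mean curvature of $\phi_i$ is exactly the bookkeeping that explains the factors in the theorem, and the paper's proof handles this only implicitly.

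One point worth tightening: in your decomposition you place both $\sum_i h^0(e_i,e_i)$ and $D_1\ln\rho_2$ in $\nu_1$, yet you then conclude that the $h^0$ part vanishes \emph{separately} from the balance $n_1\sigma_2^2 D_1\ln\rho_2 = 2\sum_i e_i(\ln\rho_2)e_i$. Orthogonality of the four summands $\mathcal{D}_1\oplus\nu_1\oplus\mathcal{D}_2\oplus\nu_2$ alone does not give this separation, since two $\nu_1$-vectors need not be orthogonal to one another. The paper handles this by simply asserting that $D_1\ln\rho_2$ and $\mathrm{trace}\,h^0_1$ are orthogonal; you are implicitly using the same claim, so you may as well state it explicitly rather than leave it buried in ``matching the remaining components.''
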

\begin{proof}
Let $e_{i}\in \mathcal{D}_{1},i=1,\cdots ,n_{1}$ and $e_{\alpha }\in \mathcal{D}_{2},\alpha =n_{1}+1,\cdots ,n_{1}+n_{2}$ 
be orthonormal frame fields of $N_{1}\times _{(\sigma _{1},\sigma _{2})}N_{2}$. Recall that the partial mean curvature $H_i$
is defined by 
\begin{equation*}
H_{i}=\frac{1}{n_{i}}\mathrm{trace}\,h_{i}.
\end{equation*} 
(i) If $\phi:N_1 \times_{(\sigma_1,\sigma_2)} N_2 \rightarrow M_1 \times_{(\rho_1,\rho_2)} M_2$ is $N_1$-minimal, then it follows from equation (4.4) that
 \begin{equation*}
H_{1}=\frac{1}{n_{1}}\sum_{i=1}^{n_{1}}\left( h^{0}(e_{i},e_{i})+2e_{i}(\ln\rho _{2})e_{i}-g_{M}(e_{i},e_{i})D\ln \rho _{2}\right)=0. 
\end{equation*}
From Lemma \ref{LemmaD}, we have $D\ln \rho _{2}=D_1\ln \rho_{2} + D_2\ln \rho_{2}$ and 
\begin{equation*}
\frac{1}{n_{1}}\sum_{i=1}^{n_{1}}h^{0}(e_{i},e_{i}) + 2\frac{1}{n_{1}}\sum_{i=1}^{n_{1}}e_{i}(\ln\rho _{2})e_{i}
- D_1\ln \rho _{2}- D_{2}\ln \rho _{2}=0.
\end{equation*}
That is,
\begin{equation*}
 0= \frac{\mathrm{trace}\,h^{0}_{1}}{n_1} + 2\frac{1}{n_{1}}\sum_{i=1}^{n_{1}}e_{i}(\ln\rho _{2})e_{i}
- D_1\ln \rho _{2}- D_{2}\ln \rho _{2}.
\end{equation*}
Since $D_1\ln \rho _{2}$ and $\mathrm{trace}\,h^{0}_{1}$ are orthogonal, we have $\phi_1$ is minimal immersion, 
$
\displaystyle n_1 D_1\ln \rho _{2} = 2\sum_{i=1}^{n_{1}}e_{i}(\ln\rho _{2})e_{i},
$
and 
$
D_{2}\ln \rho _{2}=0.
$
(ii) is similar to (i). Now we prove (iii). If $\phi:N_1 \times_{(\sigma_1,\sigma_2)} N_2 \rightarrow M_1 \times_{(\rho_1,\rho_2)} M_2$ is minimal immersion, then $\mathrm{trace}\, h=0$. By applying equations (4.4) and (4.5), we get
\begin{align*}\label{MininalRela1}
 0&=   \sum_{a=1}^{n_{1}}\left\{ h^{0}(e_{a},e_{a}) + 2e_{a}(\ln
\rho _{2})e_{a} - n_1 \sigma^{2}_{2}D_1\ln \rho _{2} - n_{1} \sigma^{2}_{2}D_2\ln \rho _{2} \right\}\nonumber\\
&  +  \sum_{\alpha=1}^{n_{2}}\left\{ h^{0}(e_{\alpha},e_{\alpha}) + 2e_{\alpha}(\ln\rho _{1})e_{\alpha} - n_2\sigma^{2}_{1}D_1\ln \rho _{1} - n_{2}\sigma^{2}_{1}D_2\ln \rho_{1} \right\},
\end{align*}
which implies that
\begin{align*} 
0&=  \mathrm{trace}h^{0}_{1} + 2\sum_{a=1}^{n_{1}}e_{a}(\ln \rho _{2})e_{a} -n_{1} \sigma^{2}_{2}D_1\ln \rho _{2} -  n_{1}  \sigma^{2}_{2}D_2\ln \rho _{2}\nonumber\\
&  +  \mathrm{trace}h^{0}_{2} + 2\sum_{a=1}^{n_{2}}e_{\alpha}(\ln \rho _{1})e_{\alpha}
-  n_{2}\sigma^{2}_{1}D_1 \ln \rho _{1} -  n_{2}\sigma^{2}_{1} D_2 \ln \rho_{1}.
\end{align*} 
Since $\mathrm{trace}\, h^{0}_{1}$, $\displaystyle\sum_{a=1}^{n_{1}}e_{a}(\ln \rho _{2})e_{a}$, $D_1\ln \rho_1$ and $D_1\ln \rho_2$ are tangent to the first factor $M_{1}$, and $\mathrm{trace}\, h^{0}_{2}$, $\displaystyle\sum_{a=1}^{n_{2}}e_{\alpha}(\ln \rho _{1})e_{\alpha}$, $D_2\ln \rho_1$ and $D_2\ln \rho_2$ are tangent to the second factor $M_{2}$, then
 \begin{equation*}
\frac{1}{n_{1}}\mathrm{trace}\, h^{0}_{1} = n_{1}^{-1}n_{2}\sigma^{2}_{1}D_1\ln \rho _{1} +  \sigma^{2}_{2}D_1\ln \rho _{2}
-\frac{2}{n_1}\sum_{a=1}^{n_{1}}e_{a}(\ln \rho _{2})e_{a}
\end{equation*}
and 
\begin{equation*}
\frac{1}{n_{2}}\mathrm{trace}\,h^{0}_{2} = \sigma^{2}_{1}D_2\ln \rho _{1} + n_{2}^{-1}n_{1} \sigma^{2}_{2}D_2\ln \rho _{2}
-\frac{2}{n_2}\sum_{\alpha=1}^{n_{2}}e_{\alpha}(\ln \rho _{2})e_{\alpha},
\end{equation*} 
which complete the proof.
\end{proof}

\section{On doubly twisted product immersions}\label{Doubly} 

A basic problem in the theory of submanifold is to provide conditions that imply that an isometric immersion of a product manifold must be a product of
isometric immersions. The first contribution to this problem was given by Moore \cite{moore}. The latter showed the following result:
\begin{theorem}
\label{Moore} \cite{moore} Let $\phi :N\rightarrow \mathbb{R}^{n}$ be an isometric immersion of a Riemannian product $N=N_{1}\times N_{2}\times
\cdots \times N_{k}$ of connected Riemannian manifolds into the Euclidean space $\mathbb{R}^{n}$. Then $\phi $ is a mixed totally geodesic immersion
if and only if $\phi $ is a product immersion, that is, there exists an isometry
$
\psi :M_{1}\times M_{2}\times \cdots \times M_{k}\rightarrow \mathbb{R}^{n},
$
and there are isometric immersions $\phi_{i}:N_{i}\rightarrow M_{i}$, $i=1,\ldots ,k$, such that
$
\phi =\psi \circ (\phi _{1}\times \phi _{2}\times \cdots \times \phi _{k}).
$
\end{theorem}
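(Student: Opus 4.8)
The plan is to prove the two implications separately. For the easy direction, suppose $\phi=\psi\circ(\phi_{1}\times\cdots\times\phi_{k})$ is a product immersion, and write $\mathcal{D}_{i}$ for the distribution tangent to the $i$-th factor. A direct computation with the Gauss formula --- exactly of the type already carried out in Section \ref{main} to obtain $h^{\phi}(X,V)=0$, but now without any twisting functions --- shows that the second fundamental form has no mixed components, that is, $h(X,V)=0$ whenever $X\in\mathcal{D}_{i}$ and $V\in\mathcal{D}_{j}$ with $i\neq j$. Hence $\phi$ is mixed totally geodesic, which settles this direction.

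For the converse I would argue by induction on $k$, so that it suffices to treat $N=N_{1}\times N_{2}$; splitting off one factor and iterating then yields the general statement. Assume $\phi:N_{1}\times N_{2}\rightarrow\mathbb{R}^{n}$ is mixed totally geodesic, so $h(X,V)=0$ for all $X\in\mathcal{D}_{1}$, $V\in\mathcal{D}_{2}$. Since $N$ carries the product metric, the distributions $\mathcal{D}_{1},\mathcal{D}_{2}$ are parallel, for basic fields $\nabla_{V}X=0$, and the mixed curvature $R(X,V)$ vanishes. Combining the Gauss equation in the flat ambient space with these two facts, I would first establish that the first normal spaces of the two factors are orthogonal, namely $\langle h(X,Y),h(V,W)\rangle=0$ for $X,Y\in\mathcal{D}_{1}$ and $V,W\in\mathcal{D}_{2}$ (apply Gauss to $\bar R(X,V,X,V)=0$ and polarize).

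The heart of the argument is to show that the subbundle $E_{1}\subset\phi^{*}T\mathbb{R}^{n}$ spanned by $\phi_{*}\mathcal{D}_{1}$ together with $\{h(X,Y):X,Y\in\mathcal{D}_{1}\}$ is constant along $N_{2}$, and symmetrically that $E_{2}$ is constant along $N_{1}$. For a generator $\phi_{*}X$ the Gauss formula gives $\overline{\nabla}_{V}\phi_{*}X=\phi_{*}\nabla_{V}X+h(V,X)=0$, using $\nabla_{V}X=0$ and mixed total geodesy. For a generator $h(X,Y)$ the Weingarten formula splits $\overline{\nabla}_{V}h(X,Y)$ as $-A_{h(X,Y)}V+D_{V}h(X,Y)$; the normal part vanishes by the Codazzi equation (again using $h(V,\cdot)=0$ on $\mathcal{D}_{1}$ and $\nabla_{V}=0$ on basic fields), while the tangential part vanishes because $\langle A_{h(X,Y)}V,Z\rangle=\langle h(V,Z),h(X,Y)\rangle$ is zero for $Z\in\mathcal{D}_{1}$ by mixed total geodesy and for $Z\in\mathcal{D}_{2}$ by the first-normal-space orthogonality just proved. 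Thus the generators of $E_{1}$ are literally constant vectors of $\mathbb{R}^{n}$ in the $N_{2}$ directions, and likewise for $E_{2}$.

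Finally I would convert this infinitesimal constancy into the desired global splitting. Fixing a base point and using the connectedness of $N_{1}$ and $N_{2}$, the constancy of $E_{1}$ and $E_{2}$ shows that $\phi$ maps each slice $N_{1}\times\{x_{2}\}$ into a fixed affine subspace parallel to a constant $V_{1}\subset\mathbb{R}^{n}$ and each slice $\{x_{1}\}\times N_{2}$ into one parallel to $V_{2}$, with $V_{1}\perp V_{2}$; translating these subspaces through the origin and adjoining any leftover orthogonal complement to one factor yields the orthogonal decomposition $\mathbb{R}^{n}=\mathbb{R}^{n_{1}}\oplus\mathbb{R}^{n_{2}}$ that realizes $\psi$, together with isometric immersions $\phi_{i}:N_{i}\rightarrow\mathbb{R}^{n_{i}}$ with $\phi=\phi_{1}\times\phi_{2}$. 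I expect this last globalization step to be the main obstacle: one must check that the two families of affine subspaces are mutually orthogonal and jointly span, control the part of the normal bundle lying outside both first normal spaces (the extra flat directions that must be assigned to one factor), and verify that the factorization is independent of the chosen base point --- all of which rely on the connectedness hypotheses and on the rigidity of isometric immersions into Euclidean space.
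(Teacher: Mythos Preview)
The paper does not prove this theorem; it is quoted from Moore \cite{moore} purely as background for the discussion in Section~\ref{Doubly}, and no argument is supplied. There is therefore nothing in the paper against which to compare your proposal.

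Taken on its own, your sketch follows Moore's original strategy and is essentially sound. One simplification is worth noting: the detour through the first normal spaces, the Codazzi equation and the bundle $E_{1}$ is more than is strictly required. In flat $\mathbb{R}^{n}$ the single identity
\[
\overline{\nabla}_{V}\,\phi_{*}X=\phi_{*}(\nabla_{V}X)+h(V,X)=0
\]
for basic $X\in\mathcal{D}_{1}$, $V\in\mathcal{D}_{2}$ already says that $\phi_{*}X$, regarded as an $\mathbb{R}^{n}$-valued function on $N$, is constant along each $N_{2}$-leaf. By connectedness this forces $\phi(p_{1},p_{2})-\phi(p_{1},p_{2}^{0})$ to be independent of $p_{1}$, and symmetrically in the other factor, whence $\phi$ splits additively as $\phi_{1}(p_{1})+\phi_{2}(p_{2})+c$. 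The orthogonality of the linear spans of $d\phi_{1}$ and $d\phi_{2}$ then follows from the same constancy together with the fact that $\phi$ is an isometric immersion of the product metric. In this formulation the globalization step you rightly flag as the delicate point becomes essentially automatic, and the leftover orthogonal complement can be adjoined to either factor as you indicate.
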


In \cite{ferus}, Ferus used Moore's Theorem to factorize isometric immersions into $\mathbb{R}^{n}$ with parallel second fundamental form. This
was an important step towards their classification. In \cite{DillenNolker}, Dillen and Nolker classified normally flat semi-parallel (a condition
slightly weaker than that of a parallel second fundamental form) submanifolds of a warped product manifolds. Nolker extended Moore's result
to isometric immersions of multiply warped products.

\begin{theorem}
\label{Nolker} \cite{nolker} Let $\phi :N_{1}\times _{\sigma_{2}}N_{2}\times \cdots \times _{\sigma _{k}}N_{k}\rightarrow \mathbb{R}^{m}(c)$ be an isometric immersion of a multiply warped product into a complete simply-connected real space form $\mathbb{R}^{m}(c)$ of constant
curvature $c$. Then $\phi $ is mixed totally geodesic immersion if and only if there exists an isometry
$
\psi :M_{1}\times _{\rho _{2}}M_{2}\times \cdots \times_{\rho_{k}}M_{k}\rightarrow \mathbb{R}^{m}(c),
$
where $M_{1}$ is an open subset of a standard space and $M_{2},\ldots ,M_{k}$ are standard spaces, and there exist isometric immersions $\phi_{i}:N_{i}\rightarrow M_{i}$, $i=1, \cdots, k$ such that
$
\sigma_{i}=\rho_{i}\circ \phi_{1},\;\; \mbox{for}\;\; i=2,\cdots,k\;\;\mbox{and}\;\; \phi =\psi \circ (\phi_{1}\times \phi_{2}\times\cdots\times\phi_{k}).
$
\end{theorem}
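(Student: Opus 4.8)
The plan is to establish both directions, the forward (``only if'') implication being the substantive one; throughout I would rely on Moore's Theorem \ref{Moore} as the flat building block. For the ``if'' direction one assumes $\phi = \psi \circ (\phi_1 \times \cdots \times \phi_k)$ with $\sigma_i = \rho_i \circ \phi_1$ and checks that $\phi$ is mixed totally geodesic. This is the same computation underlying the mixed-totally-geodesic conclusions of Corollary \ref{Chen}: using the warped-product Gauss formula one verifies directly that $h^\phi(X,V) = 0$ whenever $X \in \mathcal{D}_1$ and $V \in \mathcal{D}_i$ with $i \geq 2$, since a product immersion carries the horizontal and vertical distributions to mutually orthogonal, compatibly nested subbundles of the space form.

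For the ``only if'' direction I would first record the intrinsic net structure of the multiply warped product. The distribution $\mathcal{D}_1$ is autoparallel, so its leaves are totally geodesic, while each $\mathcal{D}_i$ with $i \geq 2$ is spherical, its leaves being extrinsic spheres whose mean curvature vector equals $-\mathrm{grad}(\ln \sigma_i)$ and lies in $\mathcal{D}_1$. The mixed-totally-geodesic hypothesis $h^\phi(X,V) = 0$ across distinct factors, combined with the Codazzi equation for $\phi$ in the space form of constant curvature $c$, then forces $h^\phi$ to be block diagonal with respect to the net $(\mathcal{D}_1, \ldots, \mathcal{D}_k)$ and forces the first normal spaces of the several factors to be parallel along $\phi$ and mutually orthogonal.

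The core is the reconstruction step. With the adapted net and the compatible orthogonal splitting of the normal bundle in hand, I would invoke the classification of warped-product representations of $\mathbb{R}^m(c)$: each such representation is determined by a nested family of totally geodesic and spherical ``standard'' submanifolds, and conversely a net whose first leaf is totally geodesic, whose remaining leaves are extrinsic spheres, and whose second fundamental form is adapted to the net is realized by a product immersion into such a representation. Concretely I would integrate $\mathcal{D}_1$ to produce $\phi_1:N_1\rightarrow M_1$ into a totally geodesic factor, integrate each spherical $\mathcal{D}_i$ to produce $\phi_i:N_i\rightarrow M_i$ into a round standard factor, assemble these into the warped product $M_1 \times_{\rho_2} \cdots \times_{\rho_k} M_k$, and choose the $\rho_i$ so that $\rho_i \circ \phi_1 = \sigma_i$, yielding the isometry $\psi$ onto an open subset of $\mathbb{R}^m(c)$.

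The main obstacle is precisely this reconstruction: promoting the local data to a globally well-defined isometry $\psi$ onto an open set rather than a merely local assembly. This is where completeness and simple-connectedness of $\mathbb{R}^m(c)$ enter, through a monodromy argument of de Rham--Moore type. For $c = 0$ this is exactly the content of Moore's Theorem \ref{Moore}; for $c \neq 0$ I would reduce to the flat case by isometrically embedding $\mathbb{R}^m(c)$ as a totally umbilical submanifold of a Euclidean space, transporting the net and its adapted second fundamental form through this embedding, applying Moore's splitting there, and descending back to $\mathbb{R}^m(c)$.
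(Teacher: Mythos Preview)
The paper does not prove this theorem at all: Theorem~\ref{Nolker} is simply quoted from \cite{nolker} as motivation for the open problem posed at the end of Section~\ref{Doubly}, and no argument for it is given anywhere in the text. There is therefore nothing in the paper against which to compare your proposal.

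For what it is worth, your sketch does follow the broad lines of N\"olker's original argument---the net structure with $\mathcal{D}_1$ autoparallel and the remaining $\mathcal{D}_i$ spherical, block-diagonality of the second fundamental form under the mixed-totally-geodesic hypothesis, and a reconstruction via warped-product representations of the space form. One point of divergence: your plan to handle $c\neq 0$ by embedding $\mathbb{R}^m(c)$ totally umbilically into a Euclidean space and invoking Moore there is not how N\"olker proceeds; he develops the structure theory of warped-product representations of $\mathbb{R}^m(c)$ directly, and the monodromy/global step is handled within that framework rather than by reduction to the flat case. Your reduction would need extra care, since a Moore-type splitting in the ambient Euclidean space does not automatically descend to a warped-product splitting of the curved space form; the umbilical direction mixes nontrivially with the factors.
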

\noindent
In the paper \cite{dajczer}, Dajczer and Vlachos gave a condition under which an isometric immersion of a warped product of manifolds into a space form must be a warped product of isometric immersions.

Let $N_{1}\times _{(\sigma _{1},\sigma _{2})}N_{2}$ be a doubly twisted product of two Riemannian manifolds $N_{1}$ and $N_{2}$ equipped with
Riemannian metrics $g_{N_{1}}$ and $g_{N_{2}}$ of dimensions $n_{1}$ and $n_{2}$, respectively, where $\sigma _{1}$ and $\sigma _{2}$ are two positive
smooth functions defined on $N_{1}\times N_{2}$. Denote by $\mathcal{D}_{1}$ and $\mathcal{D}_{2}$ the distributions obtained from the vectors tangent to
$N_{1}$ and $N_{2}$, respectively, (or more precisely, from vectors tangent to the horizontal lifts of $N_{1}$ and $N_{2}$, respectively).
 
 Let $\phi:N_1 \times_{(\sigma_1,\sigma_2)} N_2 \rightarrow M$ be an isometric immersion of a doubly twisted product $N_1\times_{(\sigma_1,\sigma_2)} N_2$ into a Riemannian manifold $M$. Denote by $h$ the second fundamental form of $\phi$. 
 
We finally end this section by the following problem: \textit{Let $\phi :N_{1}\times_{(\sigma _{1},\sigma _{2})}N_{2}\rightarrow \mathbb{R}^{m}$ be an isometric immersion of a connected doubly twisted product $N=N_{1}\times _{(\sigma _{1},\sigma _{2})}N_{2}$ into a real space form $\mathbb{R}^{m}(c)$ of constant curvature $c$. Now, if $\phi $ is mixed totally geodesic, is there an isometric immersion $\psi :M_{1}\times _{(\rho_{1},\rho _{2})}M_{2}\rightarrow G$ from a doubly twisted product $M_{1}\times _{(\rho _{1},\rho _{2})}M_{2}$ onto an open, dense subset $G\subset \mathbb{R}^{m}$ and a direct product isometric immersion $(\phi_{1},\phi_{2}):N_{1}\times N_{2}\rightarrow M_{1}\times M_{2}$ such that $\sigma _{i}=\rho_{i}\circ (\phi_{1}\times \phi_{2})$ for $i=1,2$ and $\; \phi =\psi \circ (\phi_{1}\times \phi_{2})$? }

\section*{Acknowledgments}  

The authors would like to thank Professor Mukut Mani Tripathi (Banaras Hindu University, India)  for his many valuable suggestions and comments.
ASD is thankful to the University of KwaZulu-Natal for financial support. This work is based on the research supported wholly/ in part by the National Research Foundation of South Africa (Grant Numbers: 95931 and 106072). The authors thank referees for helping them to improve the presentation.

\end{document}